\providecommand{\U}[1]{\protect\rule{.1in}{.1in}}
\newtheorem{theorem}{Theorem}
\newtheorem{acknowledgement}[theorem]{Acknowledgement}
\newtheorem{lemma}[theorem]{Lemma}
\newenvironment{proof}[1][Proof]{\noindent\textbf{#1.} }{\ \rule{0.5em}{0.5em}}
\begin{document}

\title{Holomorphic Continuation via Laplace-Fourier series}
\author{O. Kounchev and H. Render}
\maketitle
\begin{abstract}
Let $B_{R}$ be the ball in the euclidean space $\mathbb{R}^{n}$ with center
$0$ and radius $R$ and let $f$ be a complex-valued, infinitely differentiable
function on $B_{R}.$ We show that the Laplace-Fourier series of $f$ has a
holomorphic extension which converges compactly in the Lie ball $\widehat
{B_{R}}$ in the complex space $\mathbb{C}^{n}$ when one assumes a natural
estimate for the Laplace-Fourier coefficients.

AMS Classification: Primary 31B05; Secondary 32A05. Keywords: Laplace series,
Fourier series, Harmonicity hull, Lie ball, Analytic extension.

The first-named author has been supported by the Greek-Bulgarian bilateral
project B-Gr17, 2005-2008. The second-named author is partially supported by
Grant BFM2003-06335-C03-03 of the D.G.I. of Spain. Both authors acknowledge
support within the project ``Institutes Partnership'' with the Alexander von
Humboldt Foundation, Bonn.

\end{abstract}

\section{Introduction}

Let $\mathbb{S}^{n-1}=\left\{  x\in\mathbb{R}^{n}:\left|  x\right|
=1\right\}  $ be the unit sphere where $\left|  x\right|  =\sqrt{x_{1}%
^{2}+...+x_{n}^{2}}$ denotes the euclidean norm of $x=\left(  x_{1}%
,...,x_{n}\right)  \in\mathbb{R}^{n}$. Furthermore, let $Y_{k,l}\left(
x\right)  ,l=1,..,a_{k},$ be a basis for the set of all harmonic homogeneous
polynomials of degree $k\geq0$ which are orthonormal with respect to the usual
scalar product
\[
\left\langle f,g\right\rangle _{\mathbb{S}^{n-1}}=\int_{\mathbb{S}^{n-1}%
}f\left(  \theta\right)  \overline{g\left(  \theta\right)  }d\theta,
\]
see \cite{ABR92}, \cite{Koun00}. For a function $f$ given on the ball
$B_{R}:=\left\{  x\in\mathbb{R}^{n}:\left|  x\right|  <R\right\}  $ we have
the the \emph{Laplace-Fourier series} of $f$ given by
\begin{equation}
\sum_{k=0}^{\infty}\sum_{l=1}^{a_{k}}f_{k,l}\left(  r\right)  Y_{k,l}\left(
\theta\right)  \label{LF}%
\end{equation}
with the \emph{Laplace-Fourier coefficients} $f_{kl},$ defined by
\begin{equation}
f_{k,l}\left(  r\right)  =\int_{\mathbb{S}^{n-1}}f\left(  r\theta\right)
Y_{k,l}\left(  \theta\right)  d\theta; \label{LFK}%
\end{equation}
see e.g. \cite{Kalf95}, \cite{Koun00}.

This paper addresses the following question: Assume that $f$ is in $C^{\infty
}\left(  B_{R}\right)  $, the set of all infinitely many times continuously
differentiable functions $f:B_{R}\rightarrow\mathbb{C}$. Under which
conditions can we conclude that the Laplace-Fourier series of $f$ provides a
holomorphic extension to a natural domain $G$ in $\mathbb{C}^{n}$ (depending
on $B_{R}$ ), say by imposing appropriate conditions on the Laplace-Fourier
coefficients $f_{kl}$ ?

Before stating our main result, let us recall some facts already observed in
\cite{Baou}:

(i) for $f\in C^{\infty}\left(  B_{R}\right)  $ the Laplace-Fourier
coefficient $f_{k,l}\left(  r\right)  $ is infinitely many times
differentiable on the interval $\left[  0,R\right]  $ and
\begin{equation}
\frac{d^{m}}{dr^{m}}f_{k,l}\left(  0\right)  =0\text{ for }m=0,...,k-1;
\label{fnull}%
\end{equation}

(ii) the function $r\longmapsto r^{-k}f_{k,l}\left(  r\right)  $ depends only
on the variable $r^{2}.$

Clearly (i) and (ii) imply the existence of a function $p_{k,l}\in C^{\infty
}\left[  0,R^{2}\right]  $ such that
\begin{equation}
p_{k,l}\left(  r^{2}\right)  =r^{-k}f_{k,l}\left(  r\right)  . \label{defp}%
\end{equation}
It is proved in \cite{Baou} that a function $f$ which is \emph{analytic} on a
neighborhood of $0$ in $\mathbb{R}^{n},$ has a holomorphic extension to a
neighborhood of $0$ in $\mathbb{C}^{n}$ if and only if there exist $t_{0}>0$
and $M>0$ such that for all $k,m\in\mathbb{N}_{0},l=1,...,,a_{k}$
\begin{equation}
\sup_{t\in\left[  0,t_{0}\right]  }\left|  \frac{d}{dt^{m}}p_{k,l}\left(
t\right)  \right|  \leq M^{k+m+1}m!. \label{BB}%
\end{equation}
Note that the last condition implies that all functions $p_{k,l}\left(
\zeta\right)  $ are holomorphic for $\left|  \zeta\right|  <1/M.$

In this paper we want to generalize the result in \cite{Baou}, which is of
local nature, to a global one. We associate to the ball $B_{R}$ in
$\mathbb{R}^{n}$ a domain $\widehat{B_{R}}$ in $\mathbb{C}^{n}$, the so-called
\emph{Lie ball }(or the \emph{classical domain} \emph{of E. Cartan of the type
IV}, see \cite[p. 59]{ACL83}, \cite{Hua}) defined by \emph{\ }
\begin{equation}
\widehat{B_{R}}:=\{z\in\mathbb{C}^{n}:\left|  z\right|  ^{2}+\sqrt{\left|
z\right|  ^{4}-\left|  q\left(  z\right)  \right|  ^{2}}<R^{2}\},
\label{Lieball}%
\end{equation}
where $\left|  z\right|  ^{2}=\left|  z_{1}\right|  ^{2}+...+\left|
z_{n}\right|  ^{2}$ for $z=\left(  z_{1},....,z_{n}\right)  \in\mathbb{C}^{n}$
and
\[
q\left(  z\right)  =z_{1}^{2}+...+z_{n}^{2}.
\]
It is not assumed that the reader is aquainted with complex analysis for the
Lie ball and we shall only need the definition given in (\ref{Lieball}). In
particular, we shall not use the fact that the Lie ball $\widehat{B_{R}}$ can
be viewed as the \emph{harmonicity hull} of the ball $B_{R}$, see \cite[p.
42]{ACL83} or \cite{Avan}. The harmonicity hull of a domain $G$ in
$\mathbb{R}^{n}$ is the unique largest domain in $\mathbb{C}^{n}$ to which
every polyharmonic function on $G$ has a holomorphic continuation, see
\cite[p. 51]{ACL83}. According to \cite[p. 54]{ACL83}, the groundwork for the
construction of harmonicity hulls was laid by N.\ Aronszajn in 1935, and
important contributions are due to P. Lelong \cite{Lelo54}. In \cite{Kise69}
Kiselman discusses hulls for elliptic operators with constant coefficients.

We are now able to formulate our two main result:

\begin{theorem}
\label{ThmMain}Let $f\in C^{\infty}\left(  B_{R}\right)  $ and let $f_{k,l}$
and $p_{k,l}$ be defined in (\ref{LFK}) and (\ref{defp}). Then $f$ has a
holomorphic extension to the Lie ball $\widehat{B_{R}}$ if and only if the
Laplace-Fourier coefficients $f_{k,l}$ are holomorphic functions on the disc
$\mathbb{D}_{R}:=\left\{  \zeta\in\mathbb{C}:\left|  \zeta\right|  <R\right\}
$ for all $k\in\mathbb{N}_{0},l=1,...,a_{k},$ and for any $0<\tau<\rho<R$
there exists a constant $C_{\rho,\tau}>0$ such that for all $k\in
\mathbb{N}_{0},l=1,...,a_{k}$%
\begin{equation}
\left|  p_{k,l}\left(  \zeta\right)  \right|  \leq C_{\rho,\tau}\frac{1}%
{\rho^{k}}\text{ for all }\left|  \zeta\right|  \leq\tau^{2}. \label{eqneu2}%
\end{equation}

\end{theorem}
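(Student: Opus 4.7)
My strategy is to define the candidate extension by the series
\[
F(z) := \sum_{k=0}^{\infty}\sum_{l=1}^{a_{k}} p_{k,l}(q(z))\,Y_{k,l}(z),
\]
with each $Y_{k,l}$ extended to $\mathbb{C}^{n}$ as a homogeneous holomorphic polynomial, and to show that the estimate (\ref{eqneu2}) characterises exactly when this series converges compactly on the Lie ball. Writing $L(z)^{2}:=|z|^{2}+\sqrt{|z|^{4}-|q(z)|^{2}}$ so that $\widehat{B_{R}}=\{L(z)<R\}$, the whole argument rests on two Lie-ball estimates: (a) $|q(z)|\leq L(z)^{2}$, which is immediate from $|q(z)|\leq|z|^{2}\leq L(z)^{2}$, and (b) the harmonic-polynomial bound $|Y_{k,l}(z)|\leq L(z)^{k}\|Y_{k,l}\|_{L^{\infty}(\mathbb{S}^{n-1})}$, a standard consequence of $\widehat{B_{R}}$ being the harmonicity hull of $B_{R}$ (the real sphere being a dominating set for harmonic polynomials on the unit Lie ball). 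Securing (b) rigorously is the principal obstacle; once accepted, both implications follow cleanly.

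For the $(\Leftarrow)$ direction, given a compact $K\subset\widehat{B_{R}}$, I would pick $\max_{z\in K}L(z)<\tau<\rho<R$ and combine (\ref{eqneu2}) with (a) and (b) to obtain $|p_{k,l}(q(z))Y_{k,l}(z)|\leq C_{\rho,\tau}(\tau/\rho)^{k}\|Y_{k,l}\|_{L^{\infty}}$ uniformly on $K$. Since the classical estimates $\|Y_{k,l}\|_{L^{\infty}}=O(k^{(n-2)/2})$ and $a_{k}=O(k^{n-2})$ give only polynomial growth in $k$, the double series is dominated by a convergent geometric-type series on $K$, so $F$ is holomorphic on $\widehat{B_{R}}$. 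Restricting to real $z=r\theta\in B_{R}$, the identities $Y_{k,l}(r\theta)=r^{k}Y_{k,l}(\theta)$ and $q(r\theta)=r^{2}$ show that $F|_{B_{R}}$ equals the Laplace--Fourier series of $f$, which converges to $f$ by smoothness of $f$.

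For the $(\Rightarrow)$ direction, let $F$ denote the holomorphic extension. A direct calculation gives $L(\zeta\theta)=|\zeta|$ for $\theta\in\mathbb{S}^{n-1}$ and $\zeta\in\mathbb{C}$, so
\[
\tilde{f}_{k,l}(\zeta):=\int_{\mathbb{S}^{n-1}}F(\zeta\theta)Y_{k,l}(\theta)\,d\theta
\]
is holomorphic on $\mathbb{D}_{R}$ and extends $f_{k,l}$; property (ii) and analytic continuation imply $\zeta^{-k}\tilde{f}_{k,l}(\zeta)$ depends only on $\zeta^{2}$, producing the desired holomorphic $p_{k,l}$ on $\{|\eta|<R^{2}\}$ with $p_{k,l}(\zeta^{2})=\zeta^{-k}\tilde{f}_{k,l}(\zeta)$. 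For fixed $\tau<\rho<R$, set $M(\rho):=\sup\{|F(\zeta\theta)|:|\zeta|=\rho,\,\theta\in\mathbb{S}^{n-1}\}$, finite since this is a compact subset of $\widehat{B_{R}}$. Cauchy--Schwarz together with $\|Y_{k,l}\|_{L^{2}}=1$ yields $|\tilde{f}_{k,l}(\zeta)|\leq|\mathbb{S}^{n-1}|^{1/2}M(\rho)$ on $|\zeta|=\rho$, hence $|p_{k,l}(\eta)|\leq|\mathbb{S}^{n-1}|^{1/2}M(\rho)/\rho^{k}$ on $|\eta|=\rho^{2}$; the maximum-modulus principle applied on the disc $|\eta|\leq\rho^{2}$ then gives (\ref{eqneu2}) on $|\eta|\leq\tau^{2}$, with $C_{\rho,\tau}=|\mathbb{S}^{n-1}|^{1/2}M(\rho)$.
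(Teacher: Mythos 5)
Your overall architecture matches the paper's, and your $(\Rightarrow)$ direction is essentially the paper's argument: parametrize complex rays $\zeta\theta$ (noting $\zeta\theta\in\widehat{B_{R}}$ iff $\left|\zeta\right|<R$), define $f_{k,l}(\zeta)=\int_{\mathbb{S}^{n-1}}F(\zeta\theta)Y_{k,l}(\theta)\,d\theta$, and use Cauchy--Schwarz with $\left\|Y_{k,l}\right\|_{L^{2}}=1$; your finish via the maximum-modulus principle on $\left|\eta\right|\leq\rho^{2}$ is in fact slightly cleaner than the paper's route through Cauchy estimates on the Taylor coefficients of $f_{k,l}$ and summation of a geometric series, and it yields a sharper constant. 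That half is fine (you do still need the vanishing of the first $k$ derivatives of $f_{k,l}$ at $0$, i.e. Lemma \ref{Lem4}, to make $\zeta^{-k}f_{k,l}(\zeta)$ holomorphic, which you implicitly invoke via ``property (ii)'').

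The genuine gap is in the $(\Leftarrow)$ direction, and it is exactly where you flag it: the bound $\left|Y_{k,l}(z)\right|\leq L(z)^{k}\left\|Y_{k,l}\right\|_{L^{\infty}(\mathbb{S}^{n-1})}$ is not a ``standard consequence of $\widehat{B_{R}}$ being the harmonicity hull'' --- the harmonicity hull property concerns holomorphic \emph{continuation} of polyharmonic functions and says nothing about sup-norm domination. By homogeneity your estimate (b) is equivalent to Hua's maximal theorem (\ref{shilov}), the nontrivial fact that the real sphere dominates homogeneous polynomials on the Lie ball; the paper cites this in the introduction precisely as the ingredient of Siciak's \emph{alternative} proof, and the point of the paper's argument is to avoid it. The paper instead proves the exact identity (\ref{add1}), $\sum_{l=1}^{a_{k}}\left|Y_{k,l}(z)\right|^{2}=\frac{a_{k}}{\omega_{n-1}}\left|q(z)\right|^{k}P_{k}^{n}\bigl(\left|z\right|^{2}/\left|q(z)\right|\bigr)$, by holomorphically continuing the addition theorem in $(z,w)$ and setting $w=\overline{z}$, and then bounds the right-hand side by $\tau^{2k}$ on $\widehat{B_{\tau}}$ using the Laplace integral representation $P_{k}^{n}(x)\leq(x+\sqrt{x^{2}-1})^{k}$; Cauchy--Schwarz over $l$ then gives $\sum_{l}\left|Y_{k,l}(z)\right|\leq\frac{a_{k}}{\sqrt{\omega_{n-1}}}\tau^{k}$, which is all that is needed. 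So your $(\Leftarrow)$ direction does not close as written: you must either supply a proof (or a precise citation) of Hua's theorem, or replace (b) by the addition-theorem computation above. Everything downstream of (b) --- the geometric domination on compacta and the identification of $F|_{B_{R}}$ with $f$ --- is correct.
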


\begin{theorem}
\label{ThmMain2}Suppose that $f$ is holomorphic on $\widehat{B_{R}},$ and let
$f_{k,l}$ be the Laplace-Fourier coefficients of the restriction of $f$ to
$B_{R}$ and $p_{k,l}\left(  r^{2}\right)  :=r^{-k}f_{k,l}\left(  r\right)  .$
Then the series
\begin{equation}
\sum_{k=0}^{\infty}\sum_{l=1}^{a_{k}}p_{k,l}\left(  q\left(  z\right)
\right)  Y_{k,l}\left(  z\right)  \label{aLF}%
\end{equation}
converges compactly and absolutely in $\widehat{B_{R}}$ to $f\left(  z\right)
.$
\end{theorem}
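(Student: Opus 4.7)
The plan is to combine the coefficient estimates furnished by Theorem~\ref{ThmMain} with classical complex-analytic growth estimates for the solid harmonics on the Lie ball. Since $f$ is holomorphic on $\widehat{B_R}$, its restriction to $B_R$ lies in $C^{\infty}(B_R)$; Theorem~\ref{ThmMain} then provides, for every $0<\tau<\rho<R$, a constant $C_{\rho,\tau}>0$ such that
\[
|p_{k,l}(\zeta)|\le C_{\rho,\tau}\,\rho^{-k}\qquad\text{for }|\zeta|\le\tau^{2},\;k\in\mathbb{N}_{0},\;l=1,\dots,a_{k}.
\]
This handles the radial part of the estimate.

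For the angular part, I would use two standard facts. Orthonormality of the $Y_{k,l}$ together with the addition formula for spherical harmonics yields $\sup_{\theta\in\mathbb{S}^{n-1}}|Y_{k,l}(\theta)|\le c\sqrt{a_{k}}$, and $a_{k}$ grows polynomially in $k$. Writing $L(z):=\sqrt{|z|^{2}+\sqrt{|z|^{4}-|q(z)|^{2}}}$, so that $\widehat{B_R}=\{L(z)<R\}$, one has the classical harmonic extension inequality
\[
|P(z)|\le L(z)^{k}\sup_{\theta\in\mathbb{S}^{n-1}}|P(\theta)|
\]
for every harmonic homogeneous polynomial $P$ of degree $k$ on $\mathbb{R}^{n}$; this is precisely the analytic content of $\widehat{B_R}$ being the harmonicity hull of $B_R$. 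Combining these, $|Y_{k,l}(z)|\le c'\,k^{\beta}L(z)^{k}$ with $c',\beta$ depending only on $n$.

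Now fix $z\in\widehat{B_R}$ and choose $\tau,\rho$ with $L(z)<\tau<\rho<R$. From $|q(z)|\le|z|^{2}\le L(z)^{2}<\tau^{2}$ and the two bounds above,
\[
\bigl|p_{k,l}(q(z))\,Y_{k,l}(z)\bigr|\le C_{\rho,\tau}\,c'\,k^{\beta}\bigl(L(z)/\rho\bigr)^{k}.
\]
Summing over $l=1,\dots,a_{k}$ inserts another polynomial factor in $k$, and summing over $k$ produces a convergent geometric–polynomial series since $L(z)/\rho<1$. The estimate is uniform on any compact subset $K\subset\widehat{B_R}$ after replacing $L(z)$ by $\max_{K}L<R$, so \eqref{aLF} converges absolutely and compactly to some holomorphic function $F$ on $\widehat{B_R}$. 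Restricting to $B_R$, where $q(x)=r^{2}$ and $Y_{k,l}(x)=r^{k}Y_{k,l}(\theta)$, the series \eqref{aLF} reduces to the Laplace-Fourier expansion $\sum_{k,l}f_{k,l}(r)Y_{k,l}(\theta)$; the completeness of the orthonormal system $\{Y_{k,l}\}$ in $L^{2}(\mathbb{S}^{n-1})$, combined with the uniform convergence just established, forces the sum to equal the continuous function $f|_{B_R}$. Since $F$ and $f$ are holomorphic on the connected set $\widehat{B_R}$ and agree on $B_R$, the identity principle yields $F\equiv f$ on $\widehat{B_R}$.

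The main obstacle is the harmonic extension inequality $|P(z)|\le L(z)^{k}\sup_{\mathbb{S}^{n-1}}|P|$: this is the single place where the special geometry of $\widehat{B_R}$ enters, and it is what forces the series to converge precisely on the Lie ball rather than on some strictly smaller domain. Everything else is one-dimensional coefficient control coming from Theorem~\ref{ThmMain} plus routine summation of a geometric–polynomial series.
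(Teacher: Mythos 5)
Your proposal is essentially correct, but it takes a genuinely different route from the paper at the two key points, and one of them creates a circularity you should be aware of. For the angular estimate you invoke $|P(z)|\leq L(z)^{k}\sup_{\mathbb{S}^{n-1}}|P|$ for harmonic homogeneous polynomials of degree $k$, which is Hua's maximum principle (\ref{shilov}) for the Lie ball; the paper explicitly sets out to avoid this nontrivial external input and instead bounds $\sum_{l=1}^{a_{k}}|Y_{k,l}(z)|^{2}$ directly, by holomorphically extending the addition theorem to obtain (\ref{add1}) and then estimating the resulting Legendre polynomial through its Laplace integral representation (the lemma giving (\ref{add3})). Both routes produce a bound of the form $\sum_{l}|Y_{k,l}(z)|\leq c\,a_{k}^{3/2}\tau^{k}$ on $\widehat{B_{\tau}}$ and then the same geometric--polynomial summation; yours is shorter if one grants Hua's theorem, the paper's is self-contained and elementary. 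The more serious issue is that you obtain the coefficient bound by citing Theorem \ref{ThmMain}: in the paper the ``only if'' direction of Theorem \ref{ThmMain} is itself deduced from the construction used to prove Theorem \ref{ThmMain2} (via Theorem \ref{ThmMain3}), so within the paper's logical order your argument is circular. The repair is exactly what the paper does in Theorem \ref{ThmMain3}: set $f_{k,l}(\zeta)=\int_{\mathbb{S}^{n-1}}f(\zeta\theta)Y_{k,l}(\theta)\,d\theta$, observe that $\zeta\longmapsto f(\zeta\theta)$ is holomorphic on $|\zeta|<R$ because $\zeta\theta\in\widehat{B_{R}}$, and derive (\ref{eqneu2}) from the Cauchy estimates on $|\zeta|=\rho$ together with Lemma \ref{Lem4}. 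With that substitution, the remainder of your argument (restriction to $B_{R}$, matching of Laplace--Fourier coefficients of the continuous limit, and the identity principle to pass from $B_{R}$ to all of $\widehat{B_{R}}$) coincides with the paper's.
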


As a byproduct of Theorem \ref{ThmMain} we can prove the following well-known
result by methods based on purely classical results for Laplace-Fourier series:

(H) \emph{Every harmonic function }$f:B_{R}\rightarrow\mathbb{C}$\emph{\ has a
holomorphic extension to the Lie ball defined in (\ref{Lieball}).}

Let us mention that the apparently most natural approach for a proof of (H),
namely via multiple Taylor series, gives only a weaker result: every harmonic
function $f:B_{R}\rightarrow\mathbb{C}$ has a holomorphic extension to the
complex ball with center $0$ and radius $R/\sqrt{2},$ so to
\[
\left\{  z\in\mathbb{C}^{n}:\left|  z\right|  <R/\sqrt{2}\right\}
\subset\widehat{B_{R}},
\]
see e.g. \cite{Haym70}, and \cite{CLP02}, \cite{Fuga82} and \cite{Khav85} for
related results. Of course, from the viewpoint of harmonicity hulls, (H) is a
trivial consequence of the fact that the Lie ball defined in (\ref{Lieball})
is indeed the harmonicity hull of $B_{R}$. However, the proof of existence and
description of a harmonicity hull in \cite{ACL83}, depending on the
serio-integral representation of a polyharmonic function, is far from being
elementary. On the other hand, J. Siciak has noted in \cite{Sici74} that (H)
is a simple consequence of the important fact (due to L.K. Hua) that for any
homogeneous polynomial $f$
\begin{equation}
\max_{x\in B_{R}}\left|  f\left(  x\right)  \right|  =\max_{z\in\widehat
{B_{R}}}\left|  f\left(  z\right)  \right|  ; \label{shilov}%
\end{equation}
for a proof of the latter result see \cite[p. 115]{Avan} or \cite{Mori98}.

\section{Proof of the results}

For $z\in\mathbb{C}^{n}$ we write $z=\xi+i\eta$ with $\xi,\eta\in
\mathbb{R}^{n}.$ Let $\left\langle \xi,\eta\right\rangle =\sum_{j=1}^{n}%
\xi_{j}\eta_{j}$ be the usual scalar product on $\mathbb{R}^{n}.$ Then
$\left|  z\right|  ^{2}=\left|  \xi\right|  ^{2}+\left|  \eta\right|  ^{2}$
and
\begin{equation}
q\left(  z\right)  :=z_{1}^{2}+...+z_{n}^{2}=\left|  \xi\right|  ^{2}-\left|
\eta\right|  ^{2}+2i\left\langle \xi,\eta\right\rangle . \label{defq}%
\end{equation}
A short computation shows that
\begin{equation}
\left|  q\left(  z\right)  \right|  ^{2}=\left(  \left|  \xi\right|
^{2}-\left|  \eta\right|  ^{2}\right)  ^{2}+4\left\langle \xi,\eta
\right\rangle ^{2}\leq\left|  z\right|  ^{4}. \label{eqqq}%
\end{equation}
Let $P_{k}^{n}\left(  t\right)  $ be the Legendre polynomial of degree $k$ for
dimension $n$ with the norming condition $P_{k}^{n}\left(  1\right)  =1.$ The
addition theorem \cite{Mull66} says that for all $x,y\in\mathbb{R}^{n}$
\begin{equation}
\sum_{l=1}^{a_{k}}Y_{k,l}\left(  x\right)  Y_{k,l}^{\ast}\left(  y\right)
=\left|  x\right|  ^{k}\left|  y\right|  ^{k}\frac{a_{k}}{\omega_{n-1}}%
P_{k}^{n}\left(  \left\langle \frac{x}{\left|  x\right|  },\frac{y}{\left|
y\right|  }\right\rangle \right)  . \label{eqADD}%
\end{equation}
Here $Y_{k,l}^{\ast}$ is the polynomial defined by conjugating the
coefficients of $Y_{k,l}$ and $a_{k}$ is the dimension of the space of all
harmonic polynomials of degree $k$, or explicitly $a_{k}:=\left(
2k+n-2\right)  \Gamma\left(  k+n-2\right)  /\Gamma\left(  k+1\right)
\Gamma\left(  n-1\right)  $ where $\Gamma$ is the Gamma function. Further
$\omega_{n-1}$ denotes the surface area of $\mathbb{S}^{n-1}.$

\begin{theorem}
Let $d_{k}$ be the leading coefficient of the Legrende polynomial $P_{k}^{n}.
$ For all $z\in\mathbb{C}^{n}$ with $\left|  q\left(  z\right)  \right|
\neq0$ the identity
\begin{equation}
\sum_{l=1}^{a_{k}}\left|  Y_{k,l}\left(  z\right)  \right|  ^{2}=\frac{a_{k}%
}{\omega_{n-1}}\left|  q\left(  z\right)  \right|  ^{k}P_{k}^{n}\left(
\frac{\left|  z\right|  ^{2}}{\left|  q\left(  z\right)  \right|  }\right)
\label{add1}%
\end{equation}
holds, and for $q\left(  z\right)  =0$
\begin{equation}
\sum_{l=1}^{a_{k}}\left|  Y_{k,l}\left(  z\right)  \right|  ^{2}=\frac{a_{k}%
}{\omega_{n-1}}d_{k}\cdot\left|  z\right|  ^{2k}. \label{add2}%
\end{equation}

\end{theorem}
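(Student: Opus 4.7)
The plan is to derive both identities from a single polynomial identity on $\mathbb{C}^n$ obtained by analytically extending the real addition theorem (\ref{eqADD}) and then specializing to the diagonal $y=\overline{z}$.

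First, I would recast the addition theorem as an honest polynomial identity in $(x,y)\in\mathbb{R}^n\times\mathbb{R}^n$, even though its RHS is written with $|x|^k|y|^k$ and the unit-vector arguments. Since $P_k^n$ has the same parity as $k$, writing $P_k^n(t)=\sum_{j=0}^{\lfloor k/2\rfloor}c_j^{(k)}t^{k-2j}$ gives
\[
|x|^k|y|^kP_k^n\!\left(\frac{\langle x,y\rangle}{|x||y|}\right)=\sum_{j}c_j^{(k)}\,\langle x,y\rangle^{k-2j}(|x|^2)^j(|y|^2)^j,
\]
so replacing $|x|^2$ by $q(x)$ and $|y|^2$ by $q(y)$ (which agree on the reals) exhibits both sides of (\ref{eqADD}) as polynomials in $x,y$. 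Since the identity holds on the real open set, it extends as a polynomial identity on $\mathbb{C}^n\times\mathbb{C}^n$.

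Next I would specialize to $x=z$, $y=\overline{z}$. On the LHS, since conjugating the coefficients and then evaluating at $\overline{z}$ is the same as complex-conjugating the value, one has $Y_{k,l}^{\ast}(\overline{z})=\overline{Y_{k,l}(z)}$, so the LHS collapses to $\sum_l|Y_{k,l}(z)|^2$. On the RHS, the real-coefficient polynomials $q$ and $\langle\cdot,\cdot\rangle$ satisfy $q(\overline{z})=\overline{q(z)}$ and $\langle z,\overline{z}\rangle=|z|^2$, yielding
\[
\sum_{l=1}^{a_k}|Y_{k,l}(z)|^2=\frac{a_k}{\omega_{n-1}}\sum_{j}c_j^{(k)}\,|z|^{2(k-2j)}|q(z)|^{2j}.
\]

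When $q(z)\neq 0$, I would factor out $|q(z)|^k$ and recognize the sum as $|q(z)|^kP_k^n(|z|^2/|q(z)|)$, giving (\ref{add1}). When $q(z)=0$ all terms with $j\ge 1$ vanish, leaving only $j=0$, and since $d_k=c_0^{(k)}$ is the leading coefficient of $P_k^n$, one gets (\ref{add2}) directly.

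The only genuinely subtle point is the first step: verifying that the real-analytic RHS of (\ref{eqADD}), once rewritten using the parity of $P_k^n$, is actually polynomial (so that extension to $\mathbb{C}^n$ is legitimate). Everything after that is substitution and bookkeeping. A mild secondary point is to check that $Y_{k,l}^{\ast}(\overline{z})=\overline{Y_{k,l}(z)}$, which is immediate from the definition of $Y_{k,l}^{\ast}$ as conjugation of coefficients.
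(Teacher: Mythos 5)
Your proposal is correct and takes essentially the same route as the paper: rewrite the right-hand side of the addition theorem as a polynomial in $(x,y)$ using the parity of $P_k^n$, extend it to $\mathbb{C}^n\times\mathbb{C}^n$, specialize to $y=\overline{z}$ using $Y_{k,l}^{\ast}(\overline{z})=\overline{Y_{k,l}(z)}$, and then re-sum, reading off (\ref{add2}) from the surviving term when $q(z)=0$. The only difference is cosmetic: you handle both parities of $k$ in one formula, while the paper treats the even case in detail and remarks that the odd case is analogous.
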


\begin{proof}
Let us consider the case that $k$ is even, say $k=2k_{1}.$ Then $P_{k}%
^{n}\left(  t\right)  $ contains only even powers in $t$, say $P_{k}%
^{n}\left(  t\right)  =\sum_{s=0}^{k_{1}}$ $c_{s}t^{2s}.$ The expression on
the right hand side in (\ref{eqADD}) is equal to the polynomial
\[
S_{k}\left(  x,y\right)  :=\frac{a_{k}}{\omega_{n-1}}\sum_{s=0}^{k_{1}}%
c_{s}\left|  x\right|  ^{2\left(  k_{1}-s\right)  }\left|  y\right|
^{2\left(  k_{1}-s\right)  }\left(  \left\langle x,y\right\rangle \right)
^{2s}.
\]
Clearly $z\longmapsto q\left(  z\right)  $ is the holomorphic extension of
$x\longmapsto\left|  x\right|  ^{2}.$ Further $f\left(  z,w\right)
:=\sum_{j=1}^{n}z_{j}w_{j}$ is the holomorphic extension of $\left(
x,y\right)  \longmapsto\left\langle x,y\right\rangle .$ Hence $S_{k}$
possesses the holomorphic extension
\begin{equation}
S_{k}\left(  z,w\right)  =\frac{a_{k}}{\omega_{n-1}}\sum_{s=0}^{k_{1}}%
c_{s}\left[  q\left(  z\right)  \right]  ^{\left(  k_{1}-s\right)  }\left[
q\left(  w\right)  \right]  ^{\left(  k_{1}-s\right)  }\left[  f\left(
z,w\right)  \right]  ^{2s}. \label{eqScomplex}%
\end{equation}
Clearly $\sum_{l=1}^{a_{k}}Y_{k,l}\left(  z\right)  Y_{k,l}^{\ast}\left(
w\right)  $ is the holomorphic extension of the left hand side in
(\ref{eqADD}), so the latter expression is equal to (\ref{eqScomplex}). Now
take $w:=\overline{z}$ and note that $Y_{k,l}^{\ast}\left(  \overline
{z}\right)  =\overline{Y_{k,l}\left(  z\right)  }$. Then
\[
\sum_{l=1}^{a_{k}}\left|  Y_{k,l}\left(  z\right)  \right|  ^{2}=\frac{a_{k}%
}{\omega_{n-1}}\sum_{s=0}^{k_{1}}c_{s}\left|  q\left(  z\right)  \right|
^{2\left(  k_{1}-s\right)  }\left|  z\right|  ^{4s}.
\]
If $q\left(  z\right)  =0$ we obtain (\ref{add2}). If $\left|  q\left(
z\right)  \right|  \neq0$ we can write
\[
\sum_{l=1}^{a_{k}}\left|  Y_{k,l}\left(  z\right)  \right|  ^{2}=\frac{a_{k}%
}{\omega_{n}}\left|  q\left(  z\right)  \right|  ^{2k_{1}}\sum_{s=0}^{k_{1}%
}c_{s}\left(  \frac{\left|  z\right|  ^{2}}{\left|  q\left(  z\right)
\right|  }\right)  ^{2s}=\frac{a_{k}}{\omega_{n}}\left|  q\left(  z\right)
\right|  ^{k}P_{k}^{n}\left(  \frac{\left|  z\right|  ^{2}}{\left|  q\left(
z\right)  \right|  }\right)  .
\]
If $k$ is odd then $P_{k}^{n}\left(  t\right)  $ contains only odd powers in
$t,$ and one can employ similar techniques as in the even case. The proof is complete.
\end{proof}

\begin{lemma}
Suppose that for a $z\in\mathbb{C}^{n}$ holds$\sqrt{\left|  z\right|
^{4}-\left|  q\left(  z\right)  \right|  ^{2}}\leq\tau^{2}-\left|  z\right|
^{2}$ $.$ Then
\begin{equation}
\left|  q\left(  z\right)  \right|  ^{k}P_{k}^{n}\left(  \frac{\left|
z\right|  ^{2}}{\left|  q\left(  z\right)  \right|  }\right)  )\leq\tau^{2k}
\label{add3}%
\end{equation}

\end{lemma}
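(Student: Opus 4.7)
The plan is to reduce the inequality (\ref{add3}) to the classical pointwise bound for the Legendre polynomial in dimension $n$,
\[
P_k^n(t)\;\le\;\bigl(t+\sqrt{t^2-1}\bigr)^k\qquad\text{for every }t\ge 1,
\]
which I shall refer to as $(\ast)$; everything else is algebraic. By (\ref{eqqq}) one has $|q(z)|^2\le|z|^4$, so (assuming $q(z)\ne 0$, which is the only case in which (\ref{add3}) makes literal sense) the ratio $t:=|z|^2/|q(z)|$ lies in $[1,\infty)$. A direct computation gives $|q(z)|\sqrt{t^2-1}=\sqrt{|z|^4-|q(z)|^2}$, so the hypothesis of the lemma rewrites as
\[
|q(z)|\bigl(t+\sqrt{t^2-1}\bigr)\;=\;|z|^2+\sqrt{|z|^4-|q(z)|^2}\;\le\;\tau^2.
\]
Raising to the $k$th power and combining with $(\ast)$ yields at once
\[
|q(z)|^kP_k^n(t)\;\le\;|q(z)|^k\bigl(t+\sqrt{t^2-1}\bigr)^k\;\le\;\tau^{2k},
\]
which is exactly (\ref{add3}).

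The substantive step is $(\ast)$. For this I would use the Gegenbauer generating function: with $\lambda:=(n-2)/2$ and the normalisation $P_k^n=C_k^\lambda/C_k^\lambda(1)$ (valid for $n\ge 3$), the identity
\[
\sum_{k=0}^\infty C_k^\lambda(t)r^k\;=\;(1-2tr+r^2)^{-\lambda}\;=\;\bigl((1-e^\phi r)(1-e^{-\phi}r)\bigr)^{-\lambda}
\]
holds after setting $t=\cosh\phi$ with $\phi\ge 0$, so that $t+\sqrt{t^2-1}=e^\phi$. For $\lambda>0$ each factor has non-negative Taylor coefficients $b_j=\binom{\lambda+j-1}{j}\ge 0$, and comparing the coefficients of $r^k$ gives
\[
C_k^\lambda(\cosh\phi)\;=\;\sum_{j+l=k}b_jb_l\,e^{(j-l)\phi}\;\le\;e^{k\phi}\sum_{j+l=k}b_jb_l\;=\;C_k^\lambda(1)\,e^{k\phi},
\]
since $|j-l|\le k$ and $\phi\ge 0$. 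Dividing by $C_k^\lambda(1)$ establishes $(\ast)$. The low-dimensional cases $n=1$ (trivial) and $n=2$ (Chebyshev, $T_k(\cosh\phi)=\cosh(k\phi)\le e^{k\phi}$) are handled separately by the same exponential bound.

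The main obstacle is $(\ast)$: once this Laplace/Gegenbauer-type bound is secured, the remainder is a routine substitution matching the hypothesis to the exponential form $e^{k\phi}$. It is worth noting that $(\ast)$ is sharp in order of magnitude, so one cannot hope to absorb any universal constant into $\tau$; the estimate is exactly calibrated to yield the Lie-ball radius $\tau$ on the right.
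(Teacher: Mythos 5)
Your reduction of (\ref{add3}) to the pointwise bound $P_k^n(t)\le\bigl(t+\sqrt{t^2-1}\bigr)^k$ for $t\ge 1$ is exactly the skeleton of the paper's proof: the authors likewise set $x:=\left|z\right|^2/\left|q\left(z\right)\right|\ge 1$, observe that $\left|q\left(z\right)\right|^kP_k^n(x)\le\bigl(\left|z\right|^2+\sqrt{\left|z\right|^4-\left|q\left(z\right)\right|^2}\bigr)^k$, and invoke the hypothesis. The only divergence is in how the key inequality is established. The paper simply quotes M\"uller's Laplace integral representation
\[
P_k^n\left(x\right)=\frac{1}{\omega_{n-1}}\int_{S^{n-2}}\left[x+\sqrt{x^2-1}\left\langle\xi_{n-1},\eta_{n-1}\right\rangle\right]^k d\eta_{n-1},
\]
and bounds the integrand by $\bigl(x+\sqrt{x^2-1}\bigr)^k$; you instead derive the bound from the Gegenbauer generating function by factoring $1-2tr+r^2=(1-e^{\phi}r)(1-e^{-\phi}r)$ with $t=\cosh\phi$ and using positivity of the coefficients $\binom{\lambda+j-1}{j}$. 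Your argument is correct (including the normalisation $C_k^\lambda(1)=\sum_{j+l=k}b_jb_l$ obtained at $\phi=0$) and has the virtue of being self-contained, at the cost of the case split at $n=2$ where $\lambda=0$ degenerates and you must fall back on Chebyshev polynomials; the Laplace representation handles all $n\ge 2$ uniformly, since for $n=2$ it reduces to the average of $\bigl(x\pm\sqrt{x^2-1}\bigr)^k$. Either route delivers the same sharp constant, which, as you note, is what calibrates the estimate to the Lie-ball radius.
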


\begin{proof}
We use the Laplace representation for the Legrende polynomial in \cite[p.
21]{Mull66}, showing that for real $x\geq1$
\[
P_{k}^{n}\left(  x\right)  =\frac{1}{\omega_{n-1}}\int_{S^{n-2}}\left[
x+\sqrt{x^{2}-1}\left\langle \xi_{n-1},\eta_{n-1}\right\rangle \right]
^{k}d\eta_{n-1}.
\]
It follows that $P_{k}^{n}\left(  x\right)  \leq\left(  x+\sqrt{x^{2}%
-1}\right)  ^{k}$ for $x\geq1.$ We apply this to $x:=\left|  z\right|
^{2}/\left|  q\left(  z\right)  \right|  \geq1.$ Then
\[
\left|  q\left(  z\right)  \right|  ^{k}P_{k}^{n}(\frac{\left|  z\right|
^{2}}{\left|  q\left(  z\right)  \right|  })\leq\left(  \left|  z\right|
^{2}+\sqrt{\left|  z\right|  ^{4}-\left|  q\left(  z\right)  \right|  ^{2}%
}\right)  ^{k}.
\]
Since $\sqrt{\left|  z\right|  ^{4}-\left|  q\left(  z\right)  \right|  ^{2}%
}\leq\tau^{2}-\left|  z\right|  ^{2}$ we obtain the desired inequality.
\end{proof}

\begin{theorem}
\label{ThmLaFo}Let $p_{k,l}$ be holomorphic functions on the disc
$\mathbb{D}_{R^{2}}:=\left\{  \zeta\in\mathbb{C}:\left|  \zeta\right|
<R^{2}\right\}  $ for all $k\in\mathbb{N}_{0},l=1,...,a_{k},$ such that for
any $0<\tau<\rho<R$ there exists a constant $C_{\rho,\tau}>0$ such that for
all $k\in\mathbb{N}_{0},l=1,...,a_{k}$%
\begin{equation}
\left|  p_{k,l}\left(  \zeta\right)  \right|  \leq C_{\rho,\tau}\frac{1}%
{\rho^{k}}\text{ for all }\left|  \zeta\right|  \leq\tau^{2}. \label{eqneu3}%
\end{equation}
Then
\begin{equation}
\sum_{k=0}^{\infty}\sum_{l=1}^{a_{k}}p_{k,l}\left(  q\left(  z\right)
\right)  Y_{k,l}\left(  z\right)  \label{aLF2}%
\end{equation}
converges compactly and absolutely in $\widehat{B_{R}}$ .
\end{theorem}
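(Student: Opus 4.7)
The plan is to bound each term of the double series by Cauchy--Schwarz in the index $l$, then use the addition-type identity (\ref{add1})--(\ref{add2}) together with the preceding Lemma to control the $Y_{k,l}$-sum, while the growth hypothesis (\ref{eqneu3}) controls the $p_{k,l}$-sum. What remains should be a geometric-type series in $k$.

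\textbf{Step 1 (localizing to a compact set).} Fix a compact set $K\subset\widehat{B_R}$. The map
\[
g(z):=\left|z\right|^{2}+\sqrt{\left|z\right|^{4}-\left|q(z)\right|^{2}}
\]
is continuous on $\mathbb{C}^{n}$ and strictly less than $R^{2}$ on $\widehat{B_R}$ by definition, so $\sup_{z\in K}g(z)=:\tau^{2}<R^{2}$. Choose any $\rho$ with $\tau<\rho<R$ and let $C_{\rho,\tau}$ be the constant furnished by (\ref{eqneu3}).

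\textbf{Step 2 (pointwise estimate).} For $z\in K$, apply Cauchy--Schwarz to the inner sum:
\[
\sum_{l=1}^{a_{k}}\left|p_{k,l}(q(z))\right|\cdot\left|Y_{k,l}(z)\right|\leq\Bigl(\sum_{l=1}^{a_{k}}\left|p_{k,l}(q(z))\right|^{2}\Bigr)^{1/2}\Bigl(\sum_{l=1}^{a_{k}}\left|Y_{k,l}(z)\right|^{2}\Bigr)^{1/2}.
\]
For the first factor: by (\ref{eqqq}) we have $|q(z)|\le|z|^{2}\le g(z)\le\tau^{2}$, so (\ref{eqneu3}) gives $|p_{k,l}(q(z))|\le C_{\rho,\tau}/\rho^{k}$, hence the first factor is at most $\sqrt{a_{k}}\,C_{\rho,\tau}/\rho^{k}$. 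For the second factor: when $q(z)\neq 0$, identity (\ref{add1}) combined with the Lemma (whose hypothesis $\sqrt{|z|^{4}-|q(z)|^{2}}\le\tau^{2}-|z|^{2}$ is exactly $g(z)\le\tau^{2}$) yields
\[
\sum_{l=1}^{a_{k}}\left|Y_{k,l}(z)\right|^{2}\le\frac{a_{k}}{\omega_{n-1}}\tau^{2k}.
\]

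\textbf{Step 3 (summing over $k$).} Combining the two factors gives, uniformly on $K$,
\[
\sum_{l=1}^{a_{k}}\left|p_{k,l}(q(z))\,Y_{k,l}(z)\right|\le\frac{C_{\rho,\tau}}{\sqrt{\omega_{n-1}}}\,a_{k}\Bigl(\frac{\tau}{\rho}\Bigr)^{k}.
\]
Since $a_{k}$ grows only polynomially in $k$ (explicitly $a_{k}\sim ck^{n-2}$) and $\tau/\rho<1$, the series $\sum_{k}a_{k}(\tau/\rho)^{k}$ converges, proving absolute and uniform convergence on $K$. As $K$ was an arbitrary compact subset of $\widehat{B_R}$, compact convergence follows.

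\textbf{Main obstacle.} The serious point is Step~1: extracting a uniform $\tau<R$ for which the Lemma applies simultaneously to every $z\in K$; this reduces to the continuity of $g$ and the defining inequality of $\widehat{B_R}$. A minor technicality is the set $\{q(z)=0\}$, where the Lemma is not literally stated; there one either invokes continuity of $z\mapsto\sum_{l}|Y_{k,l}(z)|^{2}$ (noting that $\{q\neq 0\}$ is dense), or applies (\ref{add2}) directly using $d_{k}\le 2^{k}$ (immediate from the Laplace representation $P_{k}^{n}(x)\le(x+\sqrt{x^{2}-1})^{k}$ as $x\to\infty$) together with $2|z|^{2}\le\tau^{2}$, which again gives the bound $\tfrac{a_{k}}{\omega_{n-1}}\tau^{2k}$.
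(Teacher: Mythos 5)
Your proof is correct and follows essentially the same route as the paper: reduce to a compact set contained in some $\widehat{B_{\tau}}$ with $\tau<R$, apply Cauchy--Schwarz in $l$, control $\sum_{l}|Y_{k,l}(z)|^{2}$ via the identity (\ref{add1}) and the Lemma giving (\ref{add3}), and sum the resulting series $\sum_{k}a_{k}(\tau/\rho)^{k}$. The only (welcome) difference is that you explicitly treat the degenerate set $\{q(z)=0\}$ via (\ref{add2}), a point the paper's proof passes over in silence.
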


\begin{proof}
We shall show that
\begin{equation}
f_{M}\left(  z\right)  :=\sum_{k=0}^{M}\sum_{l=1}^{a_{k}}p_{k,l}\left(
q\left(  z\right)  \right)  Y_{k,l}\left(  z\right)  \label{deffm}%
\end{equation}
converges compactly and absolutely on $\widehat{B_{R}}$ to a holomorphic
function. Note that $z\in\widehat{B_{R}}$ implies $\left|  z\right|  <R,$ and
(\ref{eqqq}) implies that $\left|  q\left(  z\right)  \right|  <R^{2}.$ Hence
(\ref{deffm}) is well-defined and clearly $f_{M}$ is holomorphic on
$\widehat{B_{R}}.$ Let now $K$ be a compact subset of $\widehat{B_{R}}.$
Recall that $z\in\widehat{B_{R}}$ if and olny if $\sqrt{\left|  z\right|
^{4}-\left|  q\left(  z\right)  \right|  ^{2}}<R^{2}-\left|  z\right|  ^{2}.$
Using the compactness of $K$, we see that there exists $\tau<R$ such that
$K\subset\widehat{B_{\tau}}.\ $As above it follows that $\left|  q\left(
z\right)  \right|  <\tau^{2}$ for $z\in K.$ Take $\rho$ with $\tau<\rho<R.$
Using (\ref{eqneu3}) and the fact that $q\left(  z\right)  \in\mathbb{D}%
_{\tau^{2}}$, we obtain for all $z\in K$
\[
\left|  f_{M}\left(  z\right)  \right|  \leq C_{\rho,\tau}\sum_{k=0}^{M}%
\frac{1}{\rho^{k}}\sum_{l=1}^{a_{k}}\left|  Y_{k,l}\left(  z\right)  \right|
.
\]
Note that by the Cauchy-Schwarz inequality $\sum_{l=1}^{a_{k}}\left|
Y_{k,l}\left(  z\right)  \right|  \leq\sqrt{a_{k}}\sqrt{\sum_{l=1}^{a_{k}%
}\left|  Y_{k,l}\left(  z\right)  \right|  ^{2}}$. Then (\ref{add1}) and
(\ref{add3}) show that
\[
\left|  f_{M}\left(  z\right)  \right|  \leq C_{\rho,\tau}\frac{1}%
{\sqrt{\omega_{n-1}}}\sum_{k=0}^{M}a_{k}\frac{\tau^{k}}{\rho^{k}}.
\]
Since $a_{k}/a_{k+1}$ converge to $1$ we see that (\ref{deffm}) converges. It
follows that $f_{M}$ converges uniformly to a holomorphic function.
\end{proof}

The following result was proved in \cite{Baou}.

\begin{lemma}
\label{Lem4} Let $f\in C^{\infty}\left(  B_{R}\right)  .$ Then the
Laplace-Fourier coefficients $f_{k,l}$ are infinitely times differentiable at
$0$ and $\frac{d^{m}}{dr^{m}}f_{k,l}\left(  0\right)  =0$ for $m=0,...,k-1.$
\end{lemma}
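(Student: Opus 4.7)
The plan is to differentiate under the integral sign in $f_{k,l}(r)=\int_{\mathbb{S}^{n-1}}f(r\theta)Y_{k,l}(\theta)\,d\theta$ and then exploit the orthogonality of spherical harmonics of different degrees. Since $f\in C^{\infty}(B_{R})$, the map $(r,\theta)\mapsto f(r\theta)$ is jointly smooth on $[0,R)\times\mathbb{S}^{n-1}$, and $\mathbb{S}^{n-1}$ is compact, so all $r$-derivatives of the integrand are continuous and uniformly bounded on $[0,r_{0}]\times\mathbb{S}^{n-1}$ for any $r_{0}<R$. Thus we may differentiate under the integral sign arbitrarily often, obtaining
\[
\frac{d^{m}}{dr^{m}}f_{k,l}(r)=\int_{\mathbb{S}^{n-1}}\bigl(D_{r}^{m}f\bigr)(r\theta)\,Y_{k,l}(\theta)\,d\theta,
\]
which establishes the first assertion that $f_{k,l}\in C^{\infty}[0,R)$.

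Next I would compute $D_{r}^{m}f(r\theta)$ via the chain rule. Iterating the identity $\frac{d}{dr}f(r\theta)=\sum_{j}\theta_{j}(\partial_{j}f)(r\theta)$ yields
\[
\frac{d^{m}}{dr^{m}}f(r\theta)=\sum_{|\alpha|=m}\frac{m!}{\alpha!}\theta^{\alpha}(\partial^{\alpha}f)(r\theta).
\]
Evaluating at $r=0$, the factors $(\partial^{\alpha}f)(0)$ become constants, so
\[
\frac{d^{m}}{dr^{m}}f_{k,l}(0)=\int_{\mathbb{S}^{n-1}}P_{m}(\theta)\,Y_{k,l}(\theta)\,d\theta,
\]
where $P_{m}(\theta):=\sum_{|\alpha|=m}\frac{m!}{\alpha!}\theta^{\alpha}(\partial^{\alpha}f)(0)$ is a homogeneous polynomial of degree $m$ in $\theta$.

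The final step is the orthogonality argument. I would invoke the standard Gauss-type decomposition (see e.g.\ \cite{ABR92}, \cite{Mull66}) which writes any homogeneous polynomial $P_{m}$ of degree $m$ on $\mathbb{R}^{n}$ as
\[
P_{m}(x)=\sum_{j=0}^{\lfloor m/2\rfloor}|x|^{2j}H_{m-2j}(x),
\]
where each $H_{m-2j}$ is a harmonic homogeneous polynomial of degree $m-2j\leq m$. Restricting to $|\theta|=1$ gives $P_{m}(\theta)=\sum_{j}H_{m-2j}(\theta)$, a finite linear combination of spherical harmonics of degrees strictly less than $k$ (using $m\leq k-1$). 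By the orthogonality of $Y_{k,l}$ to any spherical harmonic of different degree in $L^{2}(\mathbb{S}^{n-1})$, each term integrates to $0$ against $Y_{k,l}$, so the $m$-th derivative vanishes at the origin.

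The only genuinely nontrivial input here is the spherical harmonic decomposition of homogeneous polynomials together with the $L^{2}$-orthogonality across distinct degrees; both are classical facts. The remaining work — justifying differentiation under the integral sign and keeping track of the chain rule — is routine given the smoothness of $f$ on $B_{R}$ and the compactness of the sphere.
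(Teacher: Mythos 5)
Your proof is correct. There is, however, no in-paper argument to compare it against: the paper states only that this lemma ``was proved in \cite{Baou}'' and gives no proof, so your write-up supplies a self-contained argument where the authors defer to a reference. Your argument is the standard one and each step holds up: differentiation under the integral sign is legitimate because $(r,\theta)\mapsto f(r\theta)$ is jointly smooth and $\mathbb{S}^{n-1}$ is compact, giving $f_{k,l}\in C^{\infty}[0,R)$ (with one-sided derivatives at $0$, which is all the lemma asks for); the iterated chain rule correctly yields $\frac{d^{m}}{dr^{m}}f(r\theta)=\sum_{|\alpha|=m}\frac{m!}{\alpha!}\theta^{\alpha}(\partial^{\alpha}f)(r\theta)$, so at $r=0$ the integrand is the restriction to the sphere of a homogeneous polynomial $P_{m}$ of degree $m$; and the decomposition $P_{m}(x)=\sum_{j}|x|^{2j}H_{m-2j}(x)$ into harmonic homogeneous pieces (see \cite{ABR92} or \cite{Mull66}) reduces the claim to the $L^{2}(\mathbb{S}^{n-1})$-orthogonality of spherical harmonics of distinct degrees, which applies since every degree $m-2j\leq m\leq k-1$ is strictly less than $k$. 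One cosmetic remark: the paper's inner product carries a complex conjugate while your integral does not, but this is harmless because the conjugate of a spherical harmonic of degree $d$ is again a spherical harmonic of degree $d$, so cross-degree orthogonality holds in either convention.
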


\begin{theorem}
\label{ThmMain3}Let $f\in C^{\infty}\left(  B_{R}\right)  $ and let
$p_{k,l}\left(  r\right)  :=r^{-k}f_{k,l}\left(  r\right)  .$ Suppose that
there exists a continuous function $\widetilde{f}:\Delta_{R}\times
\mathbb{S}^{n-1}\rightarrow\mathbb{C}$ such that

1) $\widetilde{f}\left(  r,\theta\right)  =f\left(  r\theta\right)  $ for all
$0\leq r<R$ and $\theta\in\mathbb{S}^{n-1}$

2) for each $\theta\in\mathbb{S}^{n-1}$ the function $\zeta\longmapsto
\widetilde{f}\left(  \zeta,\theta\right)  $ is holomorphic for $\left|
\zeta\right|  <R.$

Then $f$ has a holomorphic extension to the Lie ball, the Laplace-Fourier
series in (\ref{aLF2}) converges compactly to $f$ and the coefficients
$p_{k,l}\left(  \zeta\right)  $ satisfy (\ref{eqneu3}).
\end{theorem}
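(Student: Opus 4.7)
The plan is to build, for each pair $(k,l)$, a holomorphic extension $F_{k,l}$ of $f_{k,l}$ to $\mathbb{D}_R$; to extract from $F_{k,l}$ (via Lemma \ref{Lem4} and an evenness argument) the function $p_{k,l}$ holomorphic on $\mathbb{D}_{R^2}$ with $F_{k,l}(\zeta)=\zeta^{k}p_{k,l}(\zeta^{2})$; to derive the bound (\ref{eqneu3}) from continuity of $\widetilde f$ via Cauchy--Schwarz and the maximum modulus principle; and finally to apply Theorem \ref{ThmLaFo}, identifying the resulting holomorphic sum with $f$ on $B_R$. As a first step I would set
\[
F_{k,l}(\zeta) := \int_{\mathbb{S}^{n-1}} \widetilde f(\zeta,\theta)\,Y_{k,l}(\theta)\,d\theta,\qquad |\zeta|<R.
\]
Joint continuity of $\widetilde f$ together with hypothesis 2) permits a Morera/Fubini argument (integrating $F_{k,l}$ around triangles in $\mathbb{D}_R$ and swapping the order of integration), so $F_{k,l}$ is holomorphic on $\mathbb{D}_R$; hypothesis 1) forces $F_{k,l}(r)=f_{k,l}(r)$ on $[0,R)$.

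Next, Lemma \ref{Lem4} guarantees that $\zeta\mapsto\zeta^{-k}F_{k,l}(\zeta)$ extends to a holomorphic function $g_{k,l}$ on $\mathbb{D}_R$, which on the real interval $(-R,R)$ equals the even function $r\mapsto p_{k,l}(r^{2})$; by the identity principle $g_{k,l}$ is even on $\mathbb{D}_R$ and therefore factors as $g_{k,l}(\zeta)=p_{k,l}(\zeta^{2})$ with $p_{k,l}$ holomorphic on $\mathbb{D}_{R^{2}}$ and consistent with (\ref{defp}). For (\ref{eqneu3}), fix $0<\tau<\rho<R$ and let $M_\rho:=\sup\{|\widetilde f(\zeta,\theta)|:|\zeta|=\rho,\ \theta\in\mathbb{S}^{n-1}\}$, which is finite by continuity on a compact set. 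Orthonormality of $\{Y_{k,l}\}$ and Cauchy--Schwarz give $|F_{k,l}(\zeta)|\le\sqrt{\omega_{n-1}}\,M_\rho$ for $|\zeta|=\rho$, hence $|p_{k,l}(\zeta^{2})|\le\sqrt{\omega_{n-1}}\,M_\rho\,\rho^{-k}$ on the circle $|\eta|=\rho^{2}$; the maximum modulus principle applied to $p_{k,l}$ on the closed disc $\{|\eta|\le\rho^{2}\}\subset\mathbb{D}_{R^{2}}$ extends this bound to the whole disc, and in particular to $|\eta|\le\tau^{2}$, yielding (\ref{eqneu3}) with $C_{\rho,\tau}:=\sqrt{\omega_{n-1}}\,M_\rho$.

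Theorem \ref{ThmLaFo} then produces a holomorphic function $F$ on $\widehat{B_{R}}$ to which the series (\ref{aLF2}) converges compactly and absolutely. To identify $F$ with $f$ on $B_R$, evaluate at a real point $x=r\theta$: since $q(r\theta)=r^{2}$ and $Y_{k,l}(r\theta)=r^{k}Y_{k,l}(\theta)$, the series collapses to the ordinary Laplace--Fourier series $\sum_{k,l}f_{k,l}(r)Y_{k,l}(\theta)$ of the smooth function $\theta\mapsto f(r\theta)$, which converges uniformly in $\theta$ to $f(r\theta)$ by the standard theory of spherical-harmonic expansions of $C^{\infty}$ functions. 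Thus $F$ is the sought holomorphic extension of $f$, and all three conclusions of the theorem follow. The main subtlety I foresee is the evenness step, where the real-variable identity $r^{-k}f_{k,l}(r)=p_{k,l}(r^{2})$ must be transported into the holomorphic world so that the extension factors through the quadratic form $q(z)$; everything else reduces to Cauchy--Schwarz, the maximum modulus principle, and Theorem \ref{ThmLaFo}.
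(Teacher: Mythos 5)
Your proposal is correct and follows essentially the same route as the paper: define $f_{k,l}(\zeta)=\int_{\mathbb{S}^{n-1}}\widetilde f(\zeta,\theta)Y_{k,l}(\theta)\,d\theta$, factor out $\zeta^{k}$ via Lemma \ref{Lem4} and evenness to obtain $p_{k,l}$ holomorphic on $\mathbb{D}_{R^{2}}$, establish (\ref{eqneu3}) from the Cauchy--Schwarz bound $|f_{k,l}(\zeta)|\le\sqrt{\omega_{n-1}}\,M_{\rho}$ on $|\zeta|=\rho$, and invoke Theorem \ref{ThmLaFo} together with the uniqueness/convergence of the spherical-harmonic expansion to identify the sum with $f$. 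The only divergence is the middle step, where you apply the maximum modulus principle directly to $p_{k,l}$ on $|\eta|\le\rho^{2}$ rather than following the paper's route through Cauchy estimates on the Taylor coefficients and a geometric-series summation; your version is slightly cleaner and even produces a constant independent of $\tau$.
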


\begin{proof}
We want to apply Theorem \ref{ThmLaFo}, and we consider at first the
Laplace-Fourier coefficients $f_{k,l}$ of $f.$ We define the holomorphic
extension of $f_{k,l}\left(  r\right)  $ by
\begin{equation}
f_{k,l}\left(  \zeta\right)  =\int_{\mathbb{S}^{n-1}}\widetilde{f}\left(
\zeta,\theta\right)  Y_{k,l}\left(  \theta\right)  d\theta. \label{deffkl}%
\end{equation}
Let us show that $f_{k,l}$ is indeed holomorphic for $\left|  \zeta\right|
<R:$ let $\left|  \zeta_{0}\right|  <R$ and $\zeta_{m}\rightarrow\zeta_{0}$
with $\left|  \zeta_{m}\right|  <R.$ Let $\rho>0$ such that $\left|  \zeta
_{m}\right|  <\rho$ for all $m.$ Since $\zeta\longmapsto f\left(  \zeta
,\theta\right)  $ is holomorphic for $\left|  \zeta\right|  <R$ we can use
Cauchy's integral formula on the path $\gamma_{\rho}\left(  t\right)  =$ $\rho
e^{it},$ i.e.
\[
\widetilde{f}\left(  \zeta,\theta\right)  =\frac{1}{2\pi i}\int_{\gamma_{\rho
}}\frac{\widetilde{f}\left(  \xi,\theta\right)  }{\xi-\zeta}d\xi.
\]
We have to show that the differential quotient $D_{m}:=\frac{f_{k,l}\left(
\zeta_{m}\right)  -f_{k,l}\left(  \zeta_{0}\right)  }{\zeta_{m}-\zeta_{0}}$
converges. Clearly
\[
D_{m}=\frac{1}{2\pi i}\frac{1}{\zeta_{m}-\zeta_{0}}\int_{\mathbb{S}^{n-1}}%
\int_{B_{\rho}}\left[  \frac{\widetilde{f}\left(  \xi,\theta\right)  }%
{\xi-\zeta_{m}}-\frac{\widetilde{f}\left(  \xi,\theta\right)  }{\xi-\zeta_{0}%
}\right]  Y_{k,l}\left(  \theta\right)  d\theta
\]
and this is equal to
\[
\frac{1}{2\pi i}\int_{\mathbb{S}^{n-1}}\int_{B_{\rho}}\frac{\widetilde
{f}\left(  \xi,\theta\right)  }{\left(  \xi-\zeta_{m}\right)  \left(
\xi-\zeta_{0}\right)  }Y_{k,l}\left(  \theta\right)  d\theta.
\]
Since this expression has clearly a limit, $f_{k,l}$ is holomorphic. Further
$p_{k,l}\left(  r^{2}\right)  :=r^{-k}f_{k,l}\left(  r\right)  $ has a
holomorphic extension to $\mathbb{D}_{R^{2}}$ by Lemma \ref{Lem4}. Let now
$0<\tau<\rho<R$. By the Cauchy-Schwarz inequality and the orthonormality of
$Y_{k,l}\left(  \theta\right)  $ one obtains from (\ref{deffkl}) for $\left|
\zeta\right|  =\rho$
\[
\left|  f_{k,l}\left(  \zeta\right)  \right|  ^{2}\leq\int_{\mathbb{S}^{n-1}%
}\left|  f\left(  \zeta\theta\right)  \right|  ^{2}d\theta\cdot\int
_{\mathbb{S}^{n-1}}\left|  Y_{k,l}\left(  \theta\right)  \right|  ^{2}%
d\theta\leq\omega_{n-1}\max_{t\in\left[  0,2\pi\right]  ,\theta\in
\mathbb{S}^{n-1}}\left|  f\left(  e^{it}\rho\theta\right)  \right|  ^{2}.
\]
The Cauchy estimate $\left|  g^{\left(  s\right)  }\left(  0\right)  \right|
\leq\frac{s!}{\rho^{s}}\max_{\left|  \zeta\right|  =\rho}\left|  g\left(
\zeta\right)  \right|  $ applied to the function $g=f_{k,l},$ and $s=k+m,$ and
the last estimate imply
\begin{equation}
\left|  \frac{d^{m+k}}{dz^{m+k}}f_{k,l}\left(  0\right)  \right|  \leq
\sqrt{\omega_{n-1}}\max_{t\in\left[  0,2\pi\right]  ,\theta\in\mathbb{S}%
^{n-1}}\left|  f\left(  e^{it}\rho\theta\right)  \right|  \cdot\frac{\left(
k+m\right)  !}{\rho^{m+k}}. \label{eqlast1}%
\end{equation}
Let us write $f_{k,l}\left(  r\right)  =\sum_{m=k}^{\infty}\frac{1}{m!}%
\frac{d^{m}}{dr^{m}}f_{k,l}\left(  0\right)  \cdot r^{m}$ for $0\leq r<R,$ cf.
(\ref{fnull}). Since $r^{-k}f_{k,l}\left(  r\right)  $ is an even function we
can write
\[
r^{-k}f_{k,l}\left(  r\right)  =\sum_{m=0}^{\infty}\frac{1}{\left(
k+2m\right)  !}\frac{d^{2m+k}}{dr^{2m+k}}f_{k,l}\left(  0\right)  \cdot
r^{2m}.
\]
Hence
\[
p_{k,l}\left(  t\right)  =\sum_{m=0}^{\infty}\frac{1}{\left(  k+2m\right)
!}\frac{d^{2m+k}}{dr^{2m+k}}f_{k,l}\left(  0\right)  \cdot t^{m}%
\]
and using (\ref{eqlast1}) we obtain the estimate for $\left|  \zeta\right|
\leq\tau^{2},$
\[
\left|  p_{k,l}\left(  \zeta\right)  \right|  \leq C\sum_{m=0}^{\infty}%
\frac{\tau^{2m}}{\rho^{2m+k}}=\frac{C}{\rho^{k}}\frac{1}{1-\frac{\tau^{2}%
}{\rho^{2}}}%
\]
where $C:=\sqrt{\omega_{n-1}}\max_{t\in\left[  0,2\pi\right]  ,\theta
\in\mathbb{S}^{n-1}}\left|  f\left(  e^{it}\rho\theta\right)  \right|  $ does
not depend on $k\in\mathbb{N}_{0}$ and $l=1,...,a_{k}.$ Now we apply Theorem
\ref{ThmLaFo}, which gives that the Laplace-Fourier series in (\ref{aLF2})
converges compactly to a holomorphic function $g.$ From the uniform
convergence of the Laplace-Fourier seeries of $g$ it is easy to see that $g$
has the same Laplace-Fourier coefficients as $f,$ so we conclude that $f=g.$
\end{proof}

\begin{proof}
of Theorem \ref{ThmMain2}. Let $f$ be holomorphic on $\widehat{B_{R}}.$
Clearly $g:=f\mid B_{R}$ is a $C^{\infty}-$function. Let us define a function
$\widetilde{g}:\Delta_{R}\times\mathbb{S}^{n-1}\rightarrow\mathbb{C}$ by
$\widetilde{g}\left(  \zeta,\theta\right)  :=f\left(  \zeta\theta\right)  $
which is well-defined since $\zeta\theta\in\widehat{B_{R}}$ for $\left|
\zeta\right|  <R$ and $\theta\in\mathbb{S}^{n-1}.$ Then $\widetilde{g}$ is
continuous, and for each fixed $\theta\in\mathbb{S}^{n-1},$ the function
$\zeta\longmapsto\widetilde{g}\left(  \zeta,\theta\right)  $ is holomorphic
and $\widetilde{g}\left(  r\theta\right)  =f\left(  r\theta\right)  =g\left(
r\theta\right)  $ for $0\leq r<R$ and for all $\theta\in\mathbb{S}^{n-1}.$ The
result follows from Theorem \ref{ThmMain3}.
\end{proof}

\begin{proof}
{of Theorem \ref{ThmMain}.} Let $f$ be holomorphic on $\widehat{B_{R}}.$ By
the previous proof we can apply Theorem \ref{ThmMain3}, so (\ref{eqneu2}) is
satisfied. For the converse, let $f\in C^{\infty}\left(  B_{R}\right)  $ and
assume that (\ref{eqneu2}) holds. By Theorem \ref{ThmLaFo} the Laplace-Fourier
series in (\ref{aLF}) converges compactly to a holomorphic function $g.$
Clearly $g$ has the same Laplace-Fourier coefficients as $f,$ so we conclude
that $f=g.$
\end{proof}

For $f\in C^{\infty}\left(  B_{R}\right)  $ let $f_{m}$ be the $m$-th
\emph{homogeneous} Taylor polynomial, so
\[
f_{m}\left(  x\right)  :=\sum_{\left|  \alpha\right|  =m}\frac{D^{\alpha
}f\left(  0\right)  }{\alpha!}x^{\alpha}%
\]
where we use the standard multi-index notation $x^{\alpha}=x_{1}^{\alpha_{1}%
}...x_{n}^{\alpha_{n}}$ for $x=\left(  x_{1},...,x_{n}\right)  \in
\mathbb{R}^{n}$ and $\alpha=\left(  \alpha_{1},...,\alpha_{n}\right)
\in\mathbb{N}_{0}^{n}$. We consider the space $A\left(  B_{R}\right)  $ of all
$f\in C^{\infty}\left(  B_{R}\right)  $ such that
\begin{equation}
f\left(  x\right)  =\sum_{m=0}^{\infty}f_{m}\left(  x\right)  \label{eqf}%
\end{equation}
converges absolutely and uniformly on compact subsets of $B_{R}$. The space
$A\left(  B_{R}\right)  $ evolved naturally in the investigations in
\cite{rend} for solving a conjecture of W. Hayman concerning sets of
uniqueness for polyharmonic functions, see \cite{Haym94}. It is well known
that every harmonic function $f:B_{R}\rightarrow\mathbb{C}$ is in $A\left(
B_{R}\right)  .$ We shall now prove that each $f\in A\left(  B_{R}\right)
$\ possesses a holomorphic extension to $\widehat{B_{R}}$ (as we said in the
introduction, this fact was already noticed in \cite{Sici74} with a different proof).

\begin{theorem}
Let $f\in C^{\infty}\left(  B_{R}\right)  ,$ and suppose that the homogeneous
Taylor series $\sum_{m=0}^{\infty}f_{m}$ converges compactly in $B_{R}$ to
$f.$ Then $f$ has a holomorphic extension to the Lie ball.
\end{theorem}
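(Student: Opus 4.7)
The plan is to invoke Theorem \ref{ThmMain3}, and for this it suffices to construct a continuous extension $\widetilde{f} : \Delta_{R} \times \mathbb{S}^{n-1} \rightarrow \mathbb{C}$ which is holomorphic in the first variable and agrees with $f(r\theta)$ on the real segment. The natural candidate exploits the homogeneity of the $f_{m}$: since $f_{m}(r\theta) = r^{m} f_{m}(\theta)$ for $r \geq 0$ and $\theta \in \mathbb{S}^{n-1}$, I would define
\[
\widetilde{f}(\zeta, \theta) := \sum_{m=0}^{\infty} f_{m}(\theta)\,\zeta^{m}.
\]
The agreement with $f(r\theta)$ on $[0,R) \times \mathbb{S}^{n-1}$ is then immediate from the hypothesis that $\sum f_{m}$ converges to $f$ in $B_{R}$.

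The key analytic step is to establish uniform convergence of this series on sets of the form $\overline{\Delta_{\tau}} \times \mathbb{S}^{n-1}$ for each $\tau < R$. Fix such a $\tau$ and pick $\rho$ with $\tau < \rho < R$. Compact convergence of $\sum f_{m}$ in $B_{R}$ gives, in particular, uniform convergence on the compact set $\{x : |x| = \rho\}$, so the continuous functions $\theta \mapsto \rho^{m} f_{m}(\theta)$ converge to $0$ uniformly on $\mathbb{S}^{n-1}$. Hence there is a constant $C$ with $|f_{m}(\theta)| \leq C/\rho^{m}$ for all $m$ and all $\theta \in \mathbb{S}^{n-1}$, which yields
\[
\sum_{m=0}^{\infty} \max_{\theta \in \mathbb{S}^{n-1}} |f_{m}(\theta)|\, |\zeta|^{m} \leq C \sum_{m=0}^{\infty} (\tau/\rho)^{m} < \infty
\]
uniformly for $|\zeta| \leq \tau$. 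Thus $\widetilde{f}$ is a normally convergent sum of continuous functions on $\overline{\Delta_{\tau}} \times \mathbb{S}^{n-1}$, hence continuous on $\Delta_{R} \times \mathbb{S}^{n-1}$, and for each fixed $\theta$ the series is a convergent power series in $\zeta$ of radius at least $R$, hence holomorphic on $\Delta_{R}$.

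With these two properties verified, Theorem \ref{ThmMain3} applies directly and delivers a holomorphic extension of $f$ to the Lie ball $\widehat{B_{R}}$, together with compact convergence of the associated Laplace–Fourier series. The main subtlety is precisely the continuity of $\widetilde{f}$ on $\Delta_{R} \times \mathbb{S}^{n-1}$: the hypothesis only supplies compact convergence of the spatial series $\sum f_{m}(x)$, and one must convert this into a uniform sup-norm bound $\max_{\theta}|f_{m}(\theta)| = O(\rho^{-m})$ on the spherical profiles. The argument above accomplishes this by trading one power of $\rho$ (moving from $\rho$ to any $\tau < \rho$) for summability, which is the standard Abel-type manoeuvre and the only nontrivial ingredient needed.
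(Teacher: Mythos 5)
Your proof is correct and follows essentially the same route as the paper: both define $\widetilde{f}(\zeta,\theta)=\sum_{m}\zeta^{m}f_{m}(\theta)$ and invoke Theorem \ref{ThmMain3}. The only difference is that you spell out the bound $\max_{\theta}|f_{m}(\theta)|\leq C\rho^{-m}$ justifying continuity and holomorphy of $\widetilde{f}$, a step the paper dismisses as clear.
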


\begin{proof}
Let $f$ as described in the theorem. Write $x\in B_{R}$ in polar coordinates
$x=r\theta$ with $\theta\in\mathbb{S}^{n-1}$ and $r\geq0.$ Then $f_{m}\left(
r\theta\right)  =r^{m}f_{m}\left(  \theta\right)  $ and $\sum_{m=0}^{\infty
}r^{m}\left|  f_{m}\left(  \theta\right)  \right|  $ converges uniformly for
all $\theta\in\mathbb{S}^{n-1}$ and $0\leq r<R.$ Let us define $\widetilde
{f}:\Delta_{R}\times\mathbb{S}^{n-1}\rightarrow\mathbb{C}$ by
\[
\widetilde{f}\left(  \zeta,\theta\right)  :=\sum_{m=0}^{\infty}\zeta^{m}%
f_{m}\left(  \theta\right)  .
\]
Clearly $\widetilde{f}$ is continuous, and the map $\zeta\longmapsto
\widetilde{f}\left(  \zeta\theta\right)  $ for $\left|  \zeta\right|  <R$ is
clearly holomorphic for each fixed $\theta\in\mathbb{S} ^{n-1}.$ The result
follows from Theorem \ref{ThmMain3}.
\end{proof}

\begin{acknowledgement}
Both authors acknowledge support within the project 
Institutes Partnership with the Alexander von Humboldt
Foundation, Bonn. The second author was partially supported by Grant
BFM2003-06335-C03-03 of the D.G.I. of Spain. 

\end{acknowledgement}

Addresses: 

 O. Kounchev\\Institute of Mathematics and Informatics, Bulgarian Academy of Sciences,  8
Acad. G. Bonchev Str., 1113 Sofia, Bulgaria; kounchev@math.bas.bg

 H. Render\\Departamento de Matem\'{a}ticas y Computaci\'{o}n, Universidad de La Rioja, 
Edificio Vives, Luis de Ulloa s/n., 26004 Logro\~{n}o, Spain; render@gmx.de


\begin{thebibliography}{99}                                                                                               %


\bibitem {ACL83}N. Aronszajn, T.M. Creese, L.J. Lipkin, \emph{Polyharmonic
functions,} Clarendon Press, Oxford 1983.

\bibitem {Avan}V. Avanissian, \emph{Cellule d'harmonicit\'{e} et prolongement
analytique complexe}, Hermann, Paris, 1985.

\bibitem {ABR92}S. Axler, P. Bourdon, W. Ramey, \emph{Harmonic Function
Theory,} Springer, New York 1992.

\bibitem {Baou}M.S. Baouendi, C. Goulaouic, L.J. Lipkin, \emph{On the operator
}$\Delta r^{2}+\mu\left(  \partial/\partial r\right)  r+\lambda,$ J.
Differential Equations 15 (1974), 499--509.

\bibitem {CLP02}A. Coffman, D. Legg, Y. Pan, \emph{A\ Taylor series condition
for harmonic extension,} Real Anal. Exchange 28 (2002/3), 235--253.

\bibitem {Fuga82}T.B. Fugard, \emph{On the largest ball of harmonic
continuation,} J. Math. Anal. Appl. 90 (1982), 548--554.

\bibitem {Gong}Sh. Gong, \emph{Integrals of Cauchy type on the ball,}
International Press, Hongkong 1993.

\bibitem {Haym70}W.K. Hayman, \emph{Power series expansions for harmonic
functions,} Bull. London Math. Soc. 2 (1970), 152--158.

\bibitem {Haym94}W.K. Hayman, \emph{A uniqueness problem for polyharmonic
functions,} V.P. Havin and N.K. Nikolski, Lecture Notes in Math., 1574,
\emph{Linear and Complex Analysis. Problem book 3.} Part II, Springer, Berlin
1994, Problem 16.16. 326-327.

\bibitem {Hua}L.K. Hua, \emph{Harmonic analysis of functions of several
complex variables in the classical domains,} Science Press, China, 1958 (in
Chinese), Transl. of Math. Monographs,\ Vol. 6, Amer. Math. Soc., 1963.

\bibitem {Kalf95}H. Kalf, \emph{On the expansion of a function on terms of
spherical harmonics in arbitrary domains,} Bull. Belg. Math. Soc. 2 (1995), 361--380.

\bibitem {Khav85}V. P. Khavin, \emph{A remark on Taylor series of harmonic
functions,} Multidimensional complex analaysis (Russian), 192--197, 277, Akad.
Nauk SSSR Sibrisk. Otdel., Inst. Fiz., Krasnoyarsk, 1985.

\bibitem {Kise69}C.O. Kiselman, \emph{Prolongement des solutions d'une
\'{e}quation aux d\'{e}riv\'{e}es partielles \`{a} coefficients constants,}
Bull. Soc. Math. France 97 (1969), 329--356.

\bibitem {KoSi92}S. Ko\l odziej, J. Sciak, \emph{Laplace Domains}, Classical
Analysis, Proceedings of the 6th Symposium, 23--29th September 1991, Poland,
World Scientific, Singapore 1992.

\bibitem {Koun00}O. Kounchev, \emph{Multivariate Polysplines. Applications to
Numerical and Wavelet Analysis,} Academic Press 2000.

\bibitem {Lelo54}P. Lelong, \emph{Prolongement analytique et singularit\'{e}
complexes des fonctions harmoniques,} Bull. Soc. Math. Belg. 1954 (1954), 10--23.

\bibitem {Mori98}M. Morimoto, \emph{Analytic Functionals on the Sphere,}
Translation of Mathematical Monographs, Vo. 178, Amer. Math. Soc., Providence,
Rhode Island 1998.

\bibitem {Mull66}C. M\"{u}ller, \emph{Spherical Harmonics,} Lecture Notes in
Mathematics, 7, Springer 1966.

\bibitem {rend}H. Render, \emph{Real Bargmann spaces, Fischer pairs and Sets
of Uniqueness for Polyharmonic Functions,} Preprint.

\bibitem {Shap89}H.S. Shapiro, \emph{An algebraic theorem of E. Fischer and
the Holomorphic Goursat Problem,} Bull. London Math. Soc. 21 (1989), 513--537.

\bibitem {Sici74}J. Siciak, \emph{Holomorphic continuation of harmonic
functions,} Ann. Polon. Math. 29 (1974), 67--73.

\bibitem {Sici02}J. Siciak, \emph{Domains of convergence of Laplace series,}
Microlocal Analysis and complex Fourier analysis, 261--272, World Sci.
Publishing, River Edge, NJ, 2002.

\bibitem {Volk90}H. Volkmer, \emph{The expansion of a holomorphic function in
a Laplace series,} SIAM J. Math. Anal. 21 (1990), 771-781.
\end{thebibliography}
\end{document}